\newtheorem{thm}{Theorem}
\newtheorem{lemma}[thm]{Lemma}
\newtheorem{prop}[thm]{Proposition}
\theoremstyle{definition}
\newtheorem{remark}[thm]{Remark}
\def\al{\alpha}
\def\ep{\varepsilon}
\def\ph{\varphi}
\def\ep{\varepsilon}
\def\vf{\varphi}
\def\NN{\mathbb N}
\def\RR{\mathbb R}
\def\R{\mathbb R}
\def\diam{\operatorname{diam}}
\def\dist{\operatorname{dist}}
\def\conv{{\rm conv\,}}
\def\cl{\overline}
\begin{document}

\title [Quotients of convex functions on nonreflexive Banach spaces]
{Quotients of  continuous convex functions on nonreflexive Banach spaces}

\author{P. Holick\'y, O. Kalenda, L. Vesel\'{y}, and L. Zaj\'\i\v{c}ek}

\address{Faculty of Mathematics and Physics\\ Charles University\\ Sokolovsk\'a~83\\ 186~75, Praha~8\\ Czech Republic
(P.H., O.K. and L.Z.)\newline
Dipartimento di Matematica ``F.~Enriques'', Universit\`a degli Studi di Milano,
Via C.~Saldini 50, 20133 Milano, Italy (L.V.)}

\email{holicky@karlin.mff.cuni.cz, kalenda@karlin.mff.cuni.cz, \newline
       vesely@mat.unimi.it, zajicek@karlin.mff.cuni.cz}

\thanks{ The third author was supported 
in part by the Ministero dell'Universit\`a e della Ricerca of Italy. The other authors were supported by MSM 0021620839 financed by MSMT of Czech Republic, by
        GA\v CR 201/06/0198 and GA\v CR 201/06/0018.}

\begin{abstract}
On each nonreflexive Banach space $X$ there exists a positive continuous convex function 
$f$ such that $1/f$ is not
 a d.c.\ function (i.e., a difference of two continuous convex functions). 
This result together with known ones implies that $X$ is reflexive if and only if each 
everywhere defined quotient of two continuous convex functions is a d.c. function. 
Our construction gives also a stronger version of Klee's result concerning renormings 
of nonreflexive spaces and non-norm-attaining functionals.
\end{abstract}

\keywords{Reflexivity, d.c. functions,  non-norm-attaining functionals, renormings.}

\subjclass[2000]{46B10, 46B03}

\maketitle

\bigskip

\smallskip

A function on a Banach space $X$ is called a {\it
d.c.\ function} if it can be represented as a difference of two
continuous convex functions (all functions considered in this note are
real-valued). Thus the system of all d.c.\ functions on $X$
is the smallest vector space containing all continuous convex functions.
Moreover, it is well-known, and not difficult to show, that it is even
an algebra and a lattice (see, e.g., \cite[III.2]{HU}). While an
everywhere defined quotient $g/f$ of two
d.c.\ functions on a finite-dimensional Banach space
is always d.c. (cf.\ \cite[Corollary]{Ha}), the situation  is completely different
for infinite-dimensional spaces:
by \cite[Corollary 5.7]{VZ}, on each infinite-dimensional Banach space there exists a
positive d.c.\  function such that $1/f$ is not
d.c. 

The following natural {question} arises:

\begin{enumerate}
\item[]
{\it
is the
quotient $g/f$ of two continuous \underbar{convex} functions on $X$ d.c.\
if $f\ne0\,$?}
\end{enumerate}

\noindent
Quite surprizingly, the answer is affirmative for all reflexive spaces
$X$; indeed, it is proved in
\cite[Remark 3.5(i)]{VZ} that $1/f$
($f\ne0$ continuous and convex)
is d.c.\ on $X$ whenever $X$ is reflexive.
The main aim of this note is to show that the above question has a negative
answer for each
nonreflexive Banach space $X$.

The following criterion for non-d.c.\ functions (cf.\ \cite[Lemma~4.1]{VZ})
suggests how to construct a counterexample.

\begin{lemma}\label{notdc}
Let $X$ be a Banach space and $h\colon X \to \R$ be a function. If
there exist sets $M\subset X$ of arbitrarily small diameter such that
$h$ is unbounded on $M$,
then $h$ is not a d.c.\ function.
\end{lemma}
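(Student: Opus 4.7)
The plan is to argue by contradiction, exploiting the classical fact that continuous convex functions on a Banach space are locally Lipschitz. Specifically, assume $h$ is d.c., so that $h = f - g$ for some continuous convex $f,g \colon X \to \R$. Then for every $x_0 \in X$ there exist $\rho(x_0) > 0$ and $L(x_0) < \infty$ such that both $f$ and $g$ are $L(x_0)$-Lipschitz on the open ball $B(x_0, \rho(x_0))$. Consequently $h$ is $2L(x_0)$-Lipschitz, and hence bounded, on each $B(x_0, \rho(x_0))$.

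From this I would derive the contradiction as follows. Take any set $M$ supplied by the hypothesis and pick any point $x_0 \in M$. Since $M \subset B(x_0, \diam(M))$, as soon as $\diam(M) < \rho(x_0)$ one has $M \subset B(x_0, \rho(x_0))$, so $h$ is bounded on $M$ --- contradicting the unboundedness of $h$ on $M$. Since the hypothesis provides sets of arbitrarily small diameter and the local Lipschitz radius $\rho(x_0)$ is strictly positive, the inequality $\diam(M) < \rho(x_0)$ can be arranged for some admissible $M$ containing an appropriate $x_0$, and the argument goes through.

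The main obstacle is that the Lipschitz radius $\rho(x_0)$ depends on the base point, while the hypothesis supplies $M$ without prescribing its location: one cannot simply fix $x_0$ first and then extract an $M$ near it. The cleanest way to handle this is to pick $M$ first and then set $x_0$ to be any element of $M$; one must then verify that among the sets of arbitrarily small diameter provided by the hypothesis there is one satisfying $\diam(M) < \rho(x_0)$ at some $x_0 \in M$. Because $\rho(y)$ is strictly positive at every single $y \in X$ by continuity of $f$ and $g$, this matching can be carried out with a short bookkeeping argument, after which local Lipschitz boundedness of $h$ on the ball $B(x_0, \rho(x_0))$ yields the desired contradiction and proves that $h$ is not d.c.
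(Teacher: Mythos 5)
Your argument has a genuine gap, and you have in fact put your finger on it yourself before waving it away. Everything you use about $h$ is that it is \emph{locally} bounded (via local Lipschitzness of $f$ and $g$), i.e.\ that $\rho(y)>0$ for every $y$. But $\inf_{y\in X}\rho(y)$ may well be $0$, and the hypothesis hands you the sets $M$ with no control whatsoever over \emph{where} they sit: it is entirely possible that every admissible $M$ with $\diam M<\ep$ lies inside the region $\{y:\rho(y)<\ep/2\}$, in which case the inequality $\diam M<\rho(x_0)$ holds for no $x_0\in M$ and no admissible $M$. The promised ``short bookkeeping argument'' does not exist. Worse, the approach is unsalvageable in principle: there are locally Lipschitz (hence locally bounded, continuous) functions on $\ell_2$ that are unbounded on sets of arbitrarily small diameter --- e.g.\ place around each point $ne_1+\frac1n e_k$ ($k\ge2$) a bump of height $k$ and support radius $\frac1{10n}$; these supports are locally finite, the resulting $h$ is locally Lipschitz, yet $h$ is unbounded on $M_n=\{ne_1+\frac1n e_k:k\ge2\}$, which has diameter $\frac{\sqrt2}{n}$. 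So no proof that uses only local boundedness of d.c.\ functions can establish the lemma; the conclusion genuinely needs the global convexity of $f$ and $g$ (for instance through the subgradient inequality, which forces a convex function that takes very large values on a set of diameter $d$ near a point where it is finite to have subgradients of norm at least of order $1/d$, and one must then exploit this across the whole sequence of sets).

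Note also that the paper does not prove this lemma at all: it is quoted from \cite[Lemma~4.1]{VZ}. So the relevant comparison is not with an argument in this paper but with the external proof, and in any case your reduction to local Lipschitzness is not a proof of the statement.
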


\noindent If there exists a continuous convex function $f$ on $X$
such that

\vskip-0.4truecm

\begin{equation}\label{funkce}
\text{$f>0$, and
there exist sets $M$ of arbitrarily small
diameters with
$\inf f(M)=0\,$,}
\end{equation}

\noindent
then $1/f$ is not a d.c.\ function by Lemma~\ref{notdc}.
(Of course, such an $f$ cannot exist if $X$ is reflexive since, in this case,
 $f$ attains its minimum on any closed ball.)

To construct $f$, it might seem natural to proceed by finding an $x^*\in X^*$ such that

\vskip-0.4truecm

\begin{equation}\label{xsep}
\begin{matrix}
\text{$x^*$ does not attain its norm, and  }\\
\text{there exist sets $M\subset B_X$ of arbitrarily
small diameter such that $\sup x^*(M)=\|x^*\|_*$.}
\end{matrix}
\end{equation}

\noindent
Indeed, if we had such an $x^*$, it would be sufficient to put $f(x):= \|x\|-\|x^*\|_*$ if
$x^*(x)=\|x^*\|_*$, and to extend $f$ to the whole $X$ so that $f$ is
constant on each line parallel to a fixed vector $v\in X$
such that $x^*(v)\neq 0$. While it is not difficult to check that
no such $x^*$ exists in the
classical nonreflexive spaces $c_0$ and $\ell_1$ (with their canonical norms), it is possible
to prove (see below) that such an $x^*$ always exists after a suitable
equivalent renorming of (a nonreflexive) $X$.

However, we proceed in a different order.
First, using a James' sequential
characterization of nonreflexivity,
we construct a continuous convex function $f$ on $X$, satisfying
(\ref{funkce}),
as a distance function from a certain bounded convex set in $X\oplus\RR$.
Using this $f$, we easily prove our main Theorem 4, which also gives a modification 
of the well known characterization
 of nonreflexive spaces by monotone sequences of closed convex sets. 
Then, using the existence of such $f$ on each hyperplane of $X$, we show  that, 
if $X$ is nonreflexive,
each nonzero functional $x^*\in X^*$ satisfies (\ref{xsep}) with respect
to a suitable equivalent norm on $X$. This last assertion is the content of
Proposition~\ref{renorming} which we believe to be of independent interest
since it improves the following
Klee's result \cite{Kl}: each nonzero bounded
linear functional on a nonreflexive Banach space $X$
is not norm-attaining for some equivalent norm on $X$.

\medskip

\noindent

Let us start by fixing some notations. We consider only Banach spaces over the reals $\R$.
We denote by $B_X$ or $B_{(X,\|\cdot\|)}$ the closed unit
ball in a Banach space $X$ endowed with a norm $\|\cdot\|$.
By $\|\cdot\|_*$ we denote the corresponding dual norm on $X^*$ (the topological dual of $X$).

In what follows, we consider $X \oplus \R$  equipped with the maximum norm,  and  we
 identify $x \in X$ with $(x,0) \in  X \oplus \R$ (and so $X$ with $X \times \{0\}$).

\begin{lemma}\label{convexfunction}
Let $X$ be a nonreflexive Banach space.
Then there exists a nonempty bounded convex set $C \subset X \oplus \R$ such that
\begin{itemize}
\item[(a)] $\ph(x):= \dist(x,C)>0$ for every $x\in X$, and
\item[(b)] for each $\ep>0$ there is a set $M_{\ep} \subset X$ with $\diam M_{\ep} < \ep$ and
$\inf\ph(M_{\ep})=0$.
\end{itemize}
\end{lemma}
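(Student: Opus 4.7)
The plan is to invoke a sequential form of James's characterization of nonreflexivity: since $X$ is nonreflexive, there exists a decreasing sequence of nonempty closed bounded convex subsets $C_1 \supset C_2 \supset \cdots$ of $B_X$ with $\bigcap_n C_n = \emptyset$. Out of these I build
$$
C := \conv\Bigl(\bigcup_{n=1}^\infty C_n \times \{1/n\}\Bigr) \subset X \oplus \R,
$$
manifestly convex and bounded (sitting in $B_X \times [0,1]$ under the max norm).

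For (a), I would argue by contradiction: suppose $(z, 0) \in \overline C$ and pick $(z_j, t_j) \in C$ with $z_j \to z$ and $t_j \to 0$. Writing each $(z_j, t_j)$ as a finite convex combination $\sum_i \lambda_{j,i}(y_{j,i}, 1/n_{j,i})$ with $y_{j,i} \in C_{n_{j,i}}$, the identity $t_j = \sum_i \lambda_{j,i}/n_{j,i}$ forces, for each fixed $N$, the total weight on the low levels to satisfy $\sum_{n_{j,i} \le N} \lambda_{j,i} \le N t_j \to 0$. Discarding this vanishing piece (whose contribution to $z_j$ has norm at most $N t_j$) and renormalizing, $z_j$ is asymptotically a convex combination of points of $\bigcup_{n > N} C_n = C_{N+1}$ (the $C_n$ are decreasing and each $C_n$ is convex). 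Passing to the limit gives $z \in C_{N+1}$; since $N$ is arbitrary, $z \in \bigcap_n C_n = \emptyset$, a contradiction. This weight-redistribution is the central calculation.

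For (b), the easy observation is $\ph(y) \le 1/n$ whenever $y \in C_n$ (since $(y, 1/n) \in C$), so it suffices to exhibit, for each $\ep > 0$, a sequence $(y_n)$ with $y_n \in C_n$ of diameter less than $\ep$. This places real demands on the choice of $(C_n)$: not every decreasing family with empty intersection admits such clusters (e.g.\ in $c_0$ the naive constraints $\{x \in B_{c_0} : x_j \ge 1 - 1/j,\ j \le n\}$ force elements of $C_n$ apart). The remedy is to use the full strength of James's sequential characterization, producing a biorthogonal-type system $(x_m) \subset B_X$, $(f_m) \subset B_{X^*}$ with $f_n(x_m) = \theta\,\mathbf{1}_{\{m \ge n\}}$ for some $\theta \in (0,1)$, and to set $C_n := \overline{\conv}\{x_m : m \ge n\}$; the affine rigidity supplied by the $f_m$'s both certifies $\bigcap_n C_n = \emptyset$ and prevents the $C_n$ from drifting apart uncontrollably, which is what allows the required small-diameter clusters to be extracted. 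The main obstacle is precisely this: the same $C$ that by (a) stays uniformly away from $X \times \{0\}$ must, by (b), have $\overline C$ approach $X \times \{0\}$ inside every arbitrarily small ball of $X$; balancing these competing requirements is the essence of what nonreflexivity buys here.
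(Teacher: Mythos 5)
Your reduction and your proof of part (a) are sound, and in one respect cleaner than the paper's. The paper does not pass through an abstract decreasing sequence $(C_n)$: it takes $C=\conv\{x_{k,n}\}$ with $x_{k,n}=2e_k+\frac2k e_n+\frac1n e_\infty$ built directly from a James system $(e_i,e_i^*)$ (with $e_i^*(e_j)>1/2$ for $i\le j$ and $=0$ for $i>j$), and proves $\cl C\cap X=\emptyset$ by an estimate involving three carefully chosen functionals $f_{i_1},f_{i_2},f_{i_3}$ that forces $\sum\al_{k,n}<1$. Your weight-redistribution argument (mass on levels $n\le N$ is at most $Nt_j\to0$; discard it, renormalize, land in the closed convex set $C_{N+1}$, conclude $z\in\bigcap_n C_n=\emptyset$) proves the corresponding statement for an \emph{arbitrary} decreasing sequence of nonempty bounded closed convex subsets of $B_X$ with empty intersection, and is more transparent.

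The gap is part (b), and it is not a detail: it is the whole content of the lemma. What your scheme requires is a decreasing sequence $(C_n)$ with empty intersection that in addition admits selections $y_n\in C_n$ of arbitrarily small diameter. That is precisely (equivalent to) condition (c) of the paper's Theorem~\ref{eqrefl}, which the paper \emph{deduces from} Lemma~\ref{convexfunction}; so it cannot be assumed here, and you do not prove it. Your proposed remedy, $C_n:=\overline{\conv}\{x_m:m\ge n\}$ for a James system with $f_n(x_m)=\theta\,\mathbf 1_{\{m\ge n\}}$, fails already in $c_0$ with the standard system $x_m=e_1+\dots+e_m$, $f_n=\theta e_n^*$: any $y_N\in C_N$ has coordinates tending to $0$, so $(y_N)_{j_0}<\theta/2$ for some $j_0>N$, while every $y'\in C_{N'}$ with $N'>j_0$ has $(y')_{j_0}=\theta$; hence \emph{every} selection $y_n\in C_n$ has diameter at least $\theta/2$. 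The ``affine rigidity'' of the $f_m$'s certifies the empty intersection but does nothing to produce small clusters. The missing idea is the paper's second index with a shrinking coefficient: for fixed $k$ the points $2e_k+\frac2k e_n$ ($n>k$) form a set of diameter at most $4/k$, yet the attached heights $1/n$ still tend to $0$; it is this $2/k$ damping (and the consequent need for the more delicate three-functional argument, since the one-index separation no longer suffices) that reconciles the two competing requirements you correctly identified but did not resolve.
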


\begin{proof}
Since $X$ is nonreflexive, by \cite[Theorem 1]{J} (see, e.g., \cite[Theorem 10.3]{Va} or \cite[Theorem 1.13.4]{M}
 for simpler proofs) there exist unit vectors $\{e_i\}_{i=1}^{\infty}$ in $X$ and unit functionals
$\{e^*_i\}_{i=1}^{\infty}$ in $X^*$
such that
\begin{equation}\label{bipo}
e^*_i(e_j)=0\ \ \text{if}\ \ i>j,\ \ \text{ and}\  \
 e^*_i(e_j)> 1/2\ \ \text{ if}\ \ i\le j.
\end{equation}
Set $e_{\infty}:=(0,1)\in X\oplus\RR$, and let
$f_i\in (X\oplus\RR)^*$ be the extension of $e_i^*$ for which $f_i(e_{\infty})=1$.
Clearly $\|f_i\|_*=2$.
For $0<k<n$ in $\NN$, we define
$$x_{k,n}:=2e_k+\frac{2}{k}e_n+\frac{1}{n}e_{\infty}.$$
Clearly
\begin{equation}\label{male}
f_i(x_{k,n})\geq 1\ \ \text{ for}\ \ 1\leq i \leq k,
\end{equation}
\begin{equation}\label{stre}
f_i(x_{k,n})\geq \frac{1}{k}\ \  \text{for}\ \ k < i \leq n,\ \ \text{ and}
\end{equation}
\begin{equation}\label{velk}
f_i(x_{k,n})=\frac{1}{n}\ \  \text{ for}\ \ i > n.
\end{equation}
We define $C:=\conv\{x_{k,n}:0<k<n,\ k,n\in\NN\}$ and $X_0:=\cl{\mathrm{span}}\{e_j:j\in\NN\}$.

To prove (a), we need to show $\cl{C}\cap X =\emptyset$. Since clearly $\cl{C}\cap X \subset X_0$, it is  sufficient to show that $\cl{C}\cap X_0 =\emptyset$.
 So, suppose to the contrary that an $x_0\in\cl{C}\cap X_0$ is given. As $\|f_i\|_*=2$ and $\lim_{i\to\infty}f_i(e_j)=0$ for each $j \in \NN$, it is easy to check that $\lim_{i\to\infty}f_i(x)=0$ for every $x\in X_0$.
So, we may find natural numbers $i_1 < i_2 < i_3$ such that
\begin{equation}\label{x0}
 f_{i_1}(x_0)<\frac{1}{3},\ \
 i_1\,  f_{i_2}(x_0)<\frac{1}{3},\ \
   i_2\,  f_{i_3}(x_0)< \frac{1}{3}.
\end{equation}
Since $x_0\in\cl{C}$ and $f_{i_1}$, $f_{i_2}$, $f_{i_3}$ are continuous, we can find $c\in C$ so close to $x_0$ that
\begin{equation}\label{c}
  f_{i_1}(c)<\frac{1}{3},\ \
 i_1\,  f_{i_2}(c)<\frac{1}{3},\ \
  i_2\,  f_{i_3}(c)< \frac{1}{3}.
\end{equation}
Since  $c\in C$, we can assign to each $(k,n)$ with $1\leq k < n$ a number $\al_{k,n} \geq 0$ so that
 $\sum \al_{k,n} = 1$, the set $\{(k,n):\ \al_{k,n}\neq 0\}$ is finite, and $c = \sum \al_{k,n} \, x_{k,n}$.

 Using subsequently \eqref{male}, \eqref{stre}, and \eqref{velk}, we obtain

 \begin{equation}\label{jedna}
 f_{i_1}(c) = \sum \al_{k,n}f_{i_1}(x_{k,n}) \geq \sum_{\substack{k \geq i_1\\ n>k}} \, \al_{k,n}\,,
 \end{equation}

 \begin{equation}\label{dve}
 f_{i_2}(c) = \sum \al_{k,n}f_{i_2}(x_{k,n}) \geq 
\sum_{\substack{k <i_1\\ n\geq i_2}} \, \frac{1}{k}\,\al_{k,n}\, 
\ge\frac{1}{i_1}\,\sum_{\substack{k <i_1\\ n\geq i_2}} \, \al_{k,n} \,,
 \end{equation}

 \begin{equation}\label{tri}
 f_{i_3}(c) = \sum \al_{k,n}f_{i_3}(x_{k,n}) \geq 
\sum_{\substack{k <i_1\\ n < i_2}} \, \frac{1}{n}\,\al_{k,n} 
\ge \frac{1}{i_2}\,\sum_{\substack{k <i_1\\ n < i_2}} \, \al_{k,n} \,.
 \end{equation}
 Using \eqref{jedna}, \eqref{dve}, \eqref{tri} and \eqref{c}, we easily obtain $\sum \al_{k,n} <1$, which is a contradiction.

To prove (b), consider an arbitrary $\ep>0$. Choose $k_0 \in \NN$ with $4/k_0 < \ep$ and set
 $M_{\ep}: = \{ 2e_{k_0} + (2/k_0) e_n: n> k_0\}$. Then clearly $\diam M_{\ep} \leq 4/k_0 <\ep$. The other property
  of $M_{\ep}$ also holds, since, for each $n> k_0$,
  $$ \inf\vf(M_{\ep}) = \dist(M_{\ep},C) \leq \|( 2e_{k_0} + (2/k_0) e_n)
-  (2e_{k_0} + (2/k_0) e_n + (1/n)e_{\infty})\| = 1/n.$$
\end{proof}

\begin{remark}\label{simpler}
\begin{enumerate}
\item[(i)]
To obtain $C$ with the weaker property  $\inf_{x \in X} \vf(x) =0$ instead of (b) in
Lemma~\ref{convexfunction},
 it is sufficient to put $C:= \conv \{2e_k + (1/k) e_{\infty}:\ k \in \NN\}$,
 and the proof  becomes simpler.
 \item[(ii)]
 Setting $C:=\conv\{2e_k+\frac{2}{k}e_n+ \frac{2}{n}e_m + \frac{1}{m}e_{\infty} :0<k<n<m,\ k,n,m\in\NN\}$,
an easy modification of the proof of Lemma~\ref{convexfunction} gives
the following property $\mathrm{(b^2)}$ which is slightly stronger than (b):
\begin{enumerate}
\item[$\mathrm{(b^2)}$]
{\em there exist sets $M\subset X$ of arbitrarily small diameter such that
$M$ contains sets $A$ of arbitrarily small diameter with
$\inf\ph(A)=0$.}
\end{enumerate}
\item[]  (Analogously,
using indices $0<k_1< \dots  <k_{p+1}$ in the definition of $C$, it is
possible to obtain the corresponding iterated property
($\mathrm{b}^p$).)
\end{enumerate}
 \end{remark}

\medskip


Now, we are ready to state the following main result of the present
paper.

\begin{thm}\label{eqrefl}
The following properties of a Banach space $X$ are equivalent.
\begin{itemize}
    \item [(a)] $X$ is nonreflexive.
    \item [(b)] There is a continuous convex function $f\colon X\to(0,\infty)$ such that $1/f$
                    is not representable as a difference of two continuous convex functions.
    \item [(c)] There is a decreasing sequence $\{C_n\}_{n=1}^\infty$ of bounded closed
                    convex subsets of $X$ such that
    \begin{equation*}
\bigcap_{n=1}^{\infty} C_n=\emptyset\,,\ \ \text{and}\ \
    \bigcap_{n=1}^{\infty} (C_n + \ep B_X) \neq \emptyset\text{ for every }\ep>0.
     \end{equation*}
\end{itemize}
\end{thm}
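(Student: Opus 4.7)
The plan is to prove a cycle of implications: $(a)\Rightarrow(b)$, $(b)\Rightarrow(a)$, $(a)\Rightarrow(c)$, $(c)\Rightarrow(a)$, using Lemma~\ref{convexfunction} in both constructive directions and a standard weak-compactness argument for each converse.

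For $(a)\Rightarrow(b)$, I would apply Lemma~\ref{convexfunction} to obtain a bounded convex set $C\subset X\oplus\R$ and simply take $f(x):=\dist(x,C)$. This $f$ is $1$-Lipschitz and convex, and positive everywhere by part~(a) of the lemma; part~(b) provides sets $M_\ep$ of arbitrarily small diameter on which $\inf f=0$, so $1/f$ is unbounded on each $M_\ep$ and Lemma~\ref{notdc} concludes that $1/f$ is not d.c. The converse $(b)\Rightarrow(a)$ needs no new work: it is the contrapositive of the fact, already recorded in the introduction, that by \cite[Remark~3.5(i)]{VZ} the reciprocal of any nowhere-zero continuous convex function on a reflexive space is d.c.

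For $(a)\Rightarrow(c)$, keeping the notation $\vf(x)=\dist(x,C)$ from Lemma~\ref{convexfunction}, I would set $C_n:=\{x\in X:\vf(x)\le 1/n\}$. Each $C_n$ is closed convex as a sublevel set of the continuous convex $\vf$, and bounded because the boundedness of $C$ in $X\oplus\R$ forces $\vf(x)\ge\|x\|-R$ for large $\|x\|$; the sequence is evidently decreasing. Part~(a) of Lemma~\ref{convexfunction} gives $\bigcap_n C_n=\vf^{-1}(0)=\emptyset$. To exhibit a common point of all $C_n+\ep B_X$, I use part~(b): for fixed $\ep>0$ pick $M_\ep$ with $\diam M_\ep<\ep$ and $\inf\vf(M_\ep)=0$, choose any $y\in M_\ep$, and for each $n$ find $x_n\in M_\ep$ with $\vf(x_n)<1/n$, so that $x_n\in C_n$ and $\|y-x_n\|<\ep$.

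The converse $(c)\Rightarrow(a)$ is the contrapositive weak-compactness argument: if $X$ were reflexive, every bounded closed convex subset of $X$ would be weakly compact. The condition $\bigcap_n(C_n+\ep B_X)\ne\emptyset$ rules out any $C_n$ being empty, so the $C_n$ form a decreasing sequence of nonempty weakly compact sets and therefore have nonempty intersection, contradicting~(c). The only step requiring some care is the verification that the sublevel sets in $(a)\Rightarrow(c)$ are bounded, so I do not anticipate a real obstacle here: Lemma~\ref{convexfunction} has done all the heavy lifting.
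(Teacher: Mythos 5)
Your proposal is correct and follows essentially the same route as the paper's own proof: the same choice $f=\ph=\dist(\cdot,C)$ combined with Lemma~\ref{notdc} for (a)$\Leftrightarrow$(b), the same sublevel sets $C_n=\{x:\ph(x)\le 1/n\}$ together with the sets $M_\ep$ for (a)$\Rightarrow$(c), and the same weak-compactness contrapositive for (c)$\Rightarrow$(a). Your explicit verification that the $C_n$ are bounded (via $\ph(x)\ge\|x\|-R$) and nonempty is a welcome detail the paper only asserts.
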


\begin{proof}
If $X$ is nonreflexive, take $f:=\ph$ where $\ph$ is as in
Lemma~\ref{convexfunction}. By Lemma~\ref{notdc}, $1/f$ is not d.c.\ on
$X$. On the other hand, if $X$ is reflexive and $f$ is a positive
continuous convex function, then $1/f$ is d.c.\ on $X$ by
\cite[Remark 3.5(i)]{VZ}. Thus (a) and (b) are equivalent.

Let us show that (a) and (c) are equivalent. If $X$ is nonreflexive,
let $\ph$ be again the function from Lemma~\ref{convexfunction}. The sets
$C_n:=\{x\in X: \ph(x)\le 1/n\}$, $n\in\mathbb{N}$, are nonempty, closed, convex, bounded
(since the set $C$ in Lemma~\ref{convexfunction} is bounded) and their
intersection is empty. Let $\ep>0$. By the properties of $\ph$, there exists $x\in X$ such
that, for each $n$, there is $y\in B(x,\ep)$ with $\ph(y)\le 1/n$, i.e. $y\in C_n$.
In other words, $x\in\bigcap_{n=1}^\infty (C_n+\ep B_X)$. Hence (a)
implies (c). On the other hand, if $X$ is reflexive, then each
decreasing sequence $\{C_n\}$ of nonempty closed bounded convex subsets
of $X$ has a nonempty intersection since each $C_n$ is weakly compact.
\end{proof}


\smallskip

Let us conclude our paper with the promissed strengthening of a result
from \cite{Kl}.

\begin{prop}\label{renorming}
Let $Y$ be a nonreflexive Banach space and $0 \neq y^* \in Y^*$. 
Then there exists an equivalent norm $|\cdot|$ on $Y$ such that
 \begin{enumerate}
 \item[(a)]
 $y^*$ does not attain its norm on  $B_{(Y,|\cdot|)}$, and
 \item[(b)]
 for each $\ep>0$, there is $M_{\ep} \subset B_{(Y,|\cdot|)}$ such that
$\diam M_\ep<\ep$ and 
$\sup y^*(M_\ep) = |y^*|_*$.
 \end{enumerate}
\end{prop}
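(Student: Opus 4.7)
The plan is to apply Lemma~\ref{convexfunction} to the closed hyperplane $X := \ker y^*$, which is nonreflexive because it has codimension one in the nonreflexive $Y$. Fix $y_0 \in Y$ with $y^*(y_0) = 1$; the decomposition $Y = X \oplus \R y_0$ is then identified with the $X \oplus \R$ of Lemma~\ref{convexfunction} via $e_\infty \leftrightarrow y_0$, and under this identification $y^*$ becomes the projection onto the second factor. The lemma produces a bounded convex set $C \subset Y$ with $\cl C \cap X = \emptyset$, and inspection of its proof shows that $y^*(C) \subset (0, 1/2]$, so $\inf y^*(\cl C) = 0$ is not attained, while the sets $\{x_{k_0,n}: n > k_0\} \subset C$ have diameter at most $4/k_0$ in the maximum norm on $X \oplus \R$ with $y^*(x_{k_0,n}) = 1/n$.

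Pick $\rho > 0$ so small that $\rho \|y^*\|_* < 1$, and declare the candidate unit ball to be
$$B := \cl{\conv}\bigl(A \cup (-A) \cup \rho B_Y\bigr), \qquad A := y_0 - \cl C.$$
Then $B$ is closed, convex, symmetric, bounded, and contains $\rho B_Y$, so its Minkowski functional $|\cdot|$ is an equivalent norm on $Y$ with $B_{(Y,|\cdot|)} = B$. A direct computation shows $\sup y^*(A) = 1 - \inf y^*(\cl C) = 1$, $\sup y^*(-A) \le -1/2$, and $\sup y^*(\rho B_Y) < 1$, giving $|y^*|_* = 1$.

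The heart of the proof, and the step I expect to be the main obstacle, is to show that this supremum is not attained, i.e., property (a). Suppose to the contrary that $y \in B$ has $y^*(y) = 1$, and write $y = \lim z_n$ with $z_n \in \conv\bigl(A \cup (-A) \cup \rho B_Y\bigr)$. Using convexity of the three sets one can regroup each $z_n$ as $z_n = \alpha_n a_n + \beta_n b_n + \gamma_n d_n$ with $a_n \in A$, $b_n \in -A$, $d_n \in \rho B_Y$, and $\alpha_n + \beta_n + \gamma_n = 1$ (vanishing weights being handled in the obvious way). Since $y^*(a_n) \le 1$ but $y^*(b_n), y^*(d_n) \le M$ for a fixed $M < 1$, the inequality $y^*(z_n) \le \alpha_n + (1-\alpha_n) M \to 1$ forces $\alpha_n \to 1$; boundedness of the three pieces then yields $z_n - a_n \to 0$, so $a_n \to y$. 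Writing $a_n = y_0 - c_n$ with $c_n \in \cl C$ and passing to the limit gives $y_0 - y \in \cl C$, which together with $y^*(y_0 - y) = 0$ contradicts $\cl C \cap X = \emptyset$.

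For (b), given $\ep > 0$ choose $k_0$ so large that $4/k_0$ is less than $\ep$ times the constant relating $|\cdot|$ to the maximum norm on $X \oplus \R$. Then $M_\ep := \{y_0 - x_{k_0,n} : n > k_0\}$ is contained in $A \subset B$, has $|\cdot|$-diameter less than $\ep$, and satisfies $\sup y^*(M_\ep) = \sup_{n > k_0}(1 - 1/n) = 1 = |y^*|_*$, completing the verification.
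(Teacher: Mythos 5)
Your proof is correct, and it reaches the same destination as the paper's by a genuinely different construction of the unit ball. Both arguments reduce to the hyperplane $X=\ker y^*$ (nonreflexive since it has finite codimension in $Y$) and invoke Lemma~\ref{convexfunction} there. The paper, however, uses only the \emph{statement} of that lemma: it takes the distance function $\ph$, restricts it to a ball $B(0,r)\supset\{\ph<\alpha\}$, and declares the unit ball to be $\overline{\conv}(D\cup(-D))$ where $D$ is the graph of $\ph-\beta$ over $B(0,r)$; non-attainment then falls out of the inclusion of the ball in the ``epigraph'' set $\{(x,t):t\ge\ph(x)-\beta\}$ together with $\ph>0$. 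You instead reach inside the lemma's \emph{proof}: you identify the auxiliary $X\oplus\R$ with $Y$ itself via $e_\infty\leftrightarrow y_0$, use the set $C=\conv\{x_{k,n}\}$ directly, and build the ball as $\overline{\conv}\bigl((y_0-\cl C)\cup(\cl C-y_0)\cup\rho B_Y\bigr)$. This is perfectly legitimate, but note that the extra facts you need --- $y^*(\cl C)\subset[0,1/2]$ with infimum $0$, non-attainment of that infimum reducing to $\cl C\cap X=\emptyset$, and the small-diameter subsets $\{x_{k_0,n}:n>k_0\}$ lying in $C$ itself rather than in $X$ --- are not part of the lemma's statement, so your proof is less modular. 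What you gain is a cleaner non-attainment argument of independent interest: the regrouping of convex combinations from $A\cup(-A)\cup\rho B_Y$, the observation that $y^*$ is bounded away from $1$ on the latter two pieces (this is exactly why the padding radius $\rho$ must satisfy $\rho\|y^*\|_*<1$, a point you correctly anticipate), and the limiting argument forcing $y_0-y\in\cl C\cap X$ are all sound, including the treatment of vanishing weights and the use of boundedness of $\cl C$ to conclude $z_n-a_n\to 0$. I find no gap; the only stylistic remark is that you should state explicitly that the Minkowski functional norm $|\cdot|$ is equivalent to the maximum norm on $X\oplus\R$ (via boundedness of $B$ and $\rho B_Y\subset B$, plus boundedness of the projection onto $X$ along $y_0$), which you implicitly use when converting the $4/k_0$ diameter bound into a $|\cdot|$-diameter bound in part (b).
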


\begin{proof}
 Set $X := \{y \in Y:\ y^*(y) = 0\}$ and choose $e\in Y$ with $y^*(e)=1$. Up to renorming, 
we may suppose that the norm on $Y$ satisfies 
$$\|y\|=\max\{\|y-y^*(y)e\|,|y^*(y)|\}$$
for all $y\in Y$. In this way we may identify
$Y$ with $X \oplus_\infty \R$ so that
$y^*((x,t))= t$ for $(x,t) \in X \times \R$. 
   
As $Y$ is not reflexive, $X$ is not reflexive, either.
Let $\varphi$ be the function on $X$ given by Lemma~\ref{convexfunction}.
Choose $\alpha>\varphi(0)$ and set
$$A=\{x\in X:\varphi(x)<\alpha \}.$$
By the properties of $\varphi$ the set $A$ is bounded. Therefore we can choose $r>0$ such that $A\subset B(0,r)$. 
Choose  $\beta >\sup\varphi(B(0,r))$; it is possible as $\varphi$ is $1$-Lipschitz. 
Further define  
\begin{align*}
D&=\{(x,t)\in X\times\RR: x\in B(0,r),\; t=\varphi(x)-\beta \},\\
C&=\overline{\mathrm{conv}}\, (D \cup (-D)).
\end{align*}
Then $C$ is clearly a bounded closed convex symmetric set.  Further, 
$0\in\operatorname{int}C$, as  $0\in A$ and $A\times(\alpha-\beta,\beta-\alpha)\subset C$.
It follows that there exists an equivalent norm $|\cdot|$ on $X\times \RR$ such that $C$ is the closed unit ball in this norm. We will show that this norm has the required properties.
      
We have
%
\[
-|y^*|_*
=\inf y^*(C) = \inf y^*\bigl(D\cup(-D)\bigr) 
= \inf y^*(D)
=\inf\{\varphi(x)-\beta :x\in B(0,r)\}
=-\beta,
\]
as clearly $\inf\ph(B(0,r))=\inf\ph(X)=0$.
%
%
Thus $|y^*|_*=\beta$.
 
Next we are going to show that $y^*$ does not attain its norm on $C$. Suppose it does.
Then there is a point $z=(x_0,-\beta)\in C$.
Note that
$$C\subset \{ (x,t)\in X\times\RR : x\in B(0,r)\ \&\ t\ge \varphi(x)-\beta \}.$$
The reason is that the set on the righthand side is closed and convex and it contains both $D$ and $-D$. It follows that $z$ belongs to the set on the righthand side, i.e. 
$-\beta\ge\varphi(x_0)-\beta$. So $\varphi(x_0)\le 0$, a contradiction.

It remains to show the assertion (b). Let $\ep >0$ be given. By the properties of $\varphi$ 
we can choose a set $P_{\ep} \subset A$ such that $\diam P_{\ep} < \ep$ and $\inf\varphi({P_{\ep}}) = 0$. (Note that $\varphi\ge\alpha$ outside of $A$.) Now set 
       $$ P^*_{\ep} := \{(x,t)\in X\times\RR:\ x \in P_{\ep},\; t = \varphi(x) - \beta\}.$$
 Then clearly  $ P^*_{\ep} \subset C$ and 
 $$\inf_{z \in  P^*_{\ep}} y^*(z) = -\beta=-|y^*|_*.$$
As $\varphi$ is $1$-Lipschitz with respect to $\|\cdot\|$, we get that $\|\cdot\|$--$\diam P^*_\ep<\ep$. Set $M_{\ep} := -  P^*_{\ep/K}$, where $K>0$ is such that $|\cdot|\le K\|\cdot\|$ on $X\times \RR$. Then $M_{\ep}$ has all required properties and the proof is complete.
\end{proof}


\begin{thebibliography}{KLW}

\bibitem {Ha}
P.~Hartman, \textit{ On functions representable as a difference of
convex functions},
Pacific J.~Math. {\bf9} (1959), 707--713.

\bibitem {HU} J.-B. Hiriart-Urruty,
\textit{ Generalized differentiability, duality and optimization for problems dealing with
differences of convex functions}, in: Convexity and Duality in Optimization
(Groningen, 1984), pp.~37--70, Lecture Notes in Econom. and Math.
Systems~256, Springer-Verlag, Berlin, 1985.

\bibitem {J}
R.C. James, \textit{ Characterizations of reflexivity}, Studia Math. {\bf23} (1964), 205--216.


\bibitem {Kl}  V.L. Klee, Jr., \textit{ Some characterizations of reflexivity},
Revista Ci., Lima {\bf52} (1950), 15--23.

\bibitem {M} R.E. Megginson, \textit{An Introduction to Banach Space Theory},
Graduate Texts in Mathematics 183, Springer-Verlag, New York, 1998.

\bibitem {Va} D. van Dulst, \textit{
Reflexive and Superreflexive Banach Spaces},
Mathematical Centre Tracts 102, Mathematisch Centrum, Amsterdam, 1978.

\bibitem {VZ} L. Vesel\'y, L. Zaj\' \i \v cek,
\textit{On compositions of delta-convex mappings and functions}, preprint (available at {http://arxiv.org}).

\end{thebibliography}
\end{document}